\newtheorem{theorem}{Theorem}[section]
\newtheorem{lemma}[theorem]{Lemma}
\newtheorem{proposition}[theorem]{Proposition}
\newtheorem{corollary}[theorem]{Corollary}
\theoremstyle{definition}
\theoremstyle{remark}
\numberwithin{equation}{section}
\begin{document}

\title{ Gradient Shrinking Solitons with Vanishing Weyl Tensor}

%    Information for first author
\author{Zhu-Hong Zhang}
%    Address of record for the research reported here
\address{Department of Mathematics, Sun Yat-Sen University, Guangzhou, P.R.China 510275}
%    Current address
%\curraddr{Department of Mathematics and Statistics,
%Case Western Reserve University, Cleveland, Ohio 43403}
\email{juhoncheung@sina.com}
%    \thanks will become a 1st page footnote.
\thanks{}

%    Information for second author
%\author{Author Two}
%\address{Mathematical Research Section, School of Mathematical Sciences,
%Australian National University, Canberra ACT 2601, Australia}
%\email{two@maths.univ.edu.au}
%\thanks{Support information for the second author.}

%    General info

\date{July 10, 2008.}

%\dedicatory{This paper is dedicated to our advisors.}

\keywords{gradient Ricci soliton, vanishing Weyl tensor,
Hamilton-Ivey pinching estimate.}

\begin{abstract}
In this paper, we will give a local version of the Hamilton-Ivey
type pinching estimate of the gradient shrinking soliton with
vanishing Weyl tensor, and then give a complete classification on
gradient shrinking solitons with vanishing Weyl tensor.
\end{abstract}

\maketitle

%\section*{This is an unnumbered first-level section head}
%This is an example of an unnumbered first-level heading.

%% The correct journal style for \specialsection is all uppercase; a known bug
%% in amsart.cls prevents this, so input must be uppercase until it is fixed.
%\specialsection*{This is a Special Section Head}
%\specialsection*{THIS IS A SPECIAL SECTION HEAD}
%This is an example of a special section head%
%%%%%%%%%%%%%%%%%%%%%%%%%%%%%%%%%%%%%%%%%%%%%%%%%%%%%%%%%%%%%%%%%%%%%%%%
%\footnote{Here is an example of a footnote. Notice that this footnote
%text is running on so that it can stand as an example of how a footnote
%with separate paragraphs should be written.
%\par
%And here is the beginning of the second paragraph.}%
%%%%%%%%%%%%%%%%%%%%%%%%%%%%%%%%%%%%%%%%%%%%%%%%%%%%%%%%%%%%%%%%%%%%%%%%

\section{Introduction}

Let $(M,g,f)$ be a gradient shrinking soliton, i.e., $(M,g)$ is a
smooth Riemannian manifold with a smooth function $f$, and satisfies
$R_{ij}+\nabla_i\nabla_j f=\lambda g_{ij}$, where $\lambda$ is a
positive constant.

The classification of the complete gradient shrinking soliton is an
important problem in the theory of the Ricci flow. Recently there
are some work which deal with this problem. Note that people often
do not distinguish between the gradient shrinking soliton and the
self-similar solution (see chapter 2 of \cite{{CK}} for the
definition). In fact, the author (see theorem 1 of \cite{ZZH}) had
shown that the complete gradient shrinking soltion must be the
self-similar solution, so they are the same.

In dimension 2 Hamilton \cite{Ha95} proved that any 2-dimensional
complete non-flat ancient solution of bounded curvature must be
$S^2$, $RP^2$, or the cigar soliton. In dimension 3, Ivey
\cite{Ivey} first showed that the compact 3-dimensional gradient
shrinking soliton has constant positive sectional curvature. For the
noncompact case, Perelman \cite{Pere} had shown that 3-dimensional
complete non-flat gradient shrinking soliton with bounded and
nonnegative sectional curvature, and in addition, is
$\kappa$-noncollapsed on all scales, must be the finite quotients of
$S^2\times R$, or $S^3$. This result of Perelamn had been improved
by Ni-Wallach \cite{NW} and Naber \cite{Na} in which they dropped
the assumption on $\kappa$-noncollapsing condition and replaced
nonnegative sectional curvature by nonnegative Ricci curvature. In
addition, Ni-Wallach \cite{NW} allowed the curvature to grow as fast
as $e^{ar(x)^2}$, where $r(x)$ is the distance function and $a$ is a
suitable small positive constant. In particular, Ni-Wallach's result
implied that any 3-dimensional complete noncompact non-flat gradient
shrinking soliton with nonnegative Ricci curvature and with
curvature not growing faster than $e^{ar(x)^2}$ must be a quotient
of the round infinite cylinder $S^2\times R$. Recently, by a local
version of the Hamilton-Ivey pinching estimate due to Chen
(Corollary 2.4 of \cite{Chen}), Cao-Chen-Zhu \cite{CCZ} obtained the
following complete classification (without any curvature bound
assumption) on 3-dimensional complete gradient shrinking soliton

\begin{theorem}
(the proposition 4.7 of \cite{CCZ}) The only 3-dimensional complete
gradient shrinking solitons are the finite quotients of $R^3$,
$S^2\times R$, and $S^3$.
\end{theorem}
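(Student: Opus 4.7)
The overall strategy is to reduce the classification to existing results that apply once we have good curvature information: Ivey's theorem \cite{Ivey} in the compact case and the Ni--Wallach theorem \cite{NW} in the noncompact case. The bridge between the soliton equation and these theorems is Chen's local Hamilton--Ivey pinching estimate (Corollary 2.4 of \cite{Chen}).

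First I would observe that the gradient shrinking soliton $(M^3, g, f)$ generates a self-similar (hence ancient) solution of the Ricci flow on $(-\infty, T)$. Chen's local pinching estimate, which crucially requires no a priori curvature bound, then applies; passing $t \to -\infty$, where the scalar curvature is forced by pinching to dominate any negative sectional curvature, yields that the sectional curvature of $(M^3, g)$ is nonnegative everywhere.

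Next I would derive the curvature growth bound needed to apply Ni--Wallach. From the standard soliton identities $R + |\nabla f|^2 = 2\lambda f + C_0$ and $R + \Delta f = 3\lambda$, together with nonnegativity of $R$ from the previous step, one obtains $|\nabla f|^2 \leq 2\lambda f$ after normalizing $f$ by a constant. Hence $\sqrt{f}$ is Lipschitz with constant $\sqrt{\lambda/2}$, so $f(x) \leq (\lambda/2)\,r(x)^2 + O(r(x))$, which gives $R(x) \leq C(1 + r(x)^2)$. In particular $R$ grows more slowly than $e^{a\,r(x)^2}$ for every $a > 0$.

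With these two ingredients I would split into cases. If $M$ is compact, Ivey's theorem states that any compact $3$-dimensional gradient shrinking soliton has constant positive sectional curvature, so $M$ is a quotient of $S^3$. If $M$ is noncompact, then by the two steps above it has nonnegative Ricci curvature and curvature growth bounded by $e^{a r(x)^2}$, so the Ni--Wallach theorem applies and forces $M$ to be either flat (a quotient of $R^3$) or a quotient of $S^2 \times R$. The main obstacle is the first step: the reduction to nonnegative sectional curvature in full generality, without any a priori curvature bound, depends entirely on Chen's local version of the Hamilton--Ivey estimate and is the central technical input that makes the classification possible. The subsequent growth estimates are standard consequences of the soliton identities, and the final case split is an invocation of the two cited classification results.
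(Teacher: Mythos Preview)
Your proposal is correct and follows essentially the same route the paper indicates for this result (which is cited from \cite{CCZ} rather than proved in full here): Chen's local Hamilton--Ivey pinching on the associated ancient flow forces nonnegative curvature, the soliton identities then give the at-most-quadratic growth of $R$, and one concludes via Ivey in the compact case and Ni--Wallach in the noncompact case. The paper's own Section~3 argument for the higher-dimensional Theorem~1.2 has exactly this structure (Corollary~3.3 and Proposition~3.4 followed by \cite{NW}/\cite{PW}), so your outline matches both the described 3-dimensional proof and its $n$-dimensional analogue.
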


The classification of complete gradient shrinking solitons on high
dimension is more difficult. Note that the 3-dimensional manifold
automatically has vanishing Weyl tensor. There are some recent work
on the complete gradient shrinking solitons with vanishing Weyl
tensor on high dimension. The first classification theorem with the
dimension $n\ge 4$ given by Gu-Zhu (see proposition 4.1 of
\cite{GZ}) says that any non-flat, $\kappa$-noncollapsing,
rotationally symmetric gradient shrinking soliton with bounded and
nonnegative sectional curvature must be the finite quotients of
$S^n\times R$ or $S^{n+1}$. Later, Kotschwar \cite{K}, improved this
result, showed that any complete rotationally symmetric gradient
shrinking soliton (in which not any curvature bound assumption) is
the finite quotients of $R^{n+1}$, $S^n\times R$, or $S^{n+1}$. Note
that any rotationally symmetric metric has vanishing Weyl tensor. In
the more general case of vanishing Weyl tensor, Ni-Wallach \cite{NW}
showed that a complete n-dimensional gradient shrinking soliton with
vanishing Weyl tensor and assume further that, it has nonnegative
Ricci curvature and the growth of the Riemannian curvature is not
faster that $e^{a(r(x)+1)}$, where $r(x)$ is the distance function
and $a$ is a suitable positive constant, then its universal cover is
either $R^n$, $S^n$, or $S^{n-1}\times R$. This result had been
improved by Peterson-Wylie (Theorem 1.2 and the remark 1.3 of
\cite{PW}) in which they only need to assume the Ricci curvature is
bounded from below and the growth of the Ricci curvature is not
faster than $e^{\frac{2}{5}cr(x)^2}$ outside of a compact set, where
$c<\frac{\lambda}{2}$. We also notice that Cao-Wang \cite{CW} had an
alternative proof of the Ni-Wallach's result \cite{NW}.

The key point to get the above complete classification theorem on
3-dimensional complete gradient shrinking soliton without curvature
bound assumption is the local version of the Hamilton-Ivey pinching
estimate. The Hamilton-Ivey pinching estimate in 3-dimension plays a
crucial role in the analysis of the Ricci flow. An open question is
how to generalize Hamilton-Ivey's to high dimension. In \cite{ZZH2},
the author obtained the following (global) Hamilton-Ivey type
pinching estimate on high dimension: \emph{Suppose we have a
solution to the Ricci flow on an n-dimension manifold which is
complete with bounded curvature and vanishing Weyl tensor for each
$t\ge0$. Assume at $t=0$ the least eigenvalue of the curvature
operator at each point are bounded below by $\nu\ge-1$. Then at all
points and all times $t\ge0$ we have the pinching estimate
$$R\ge(-\nu)[\log(-\nu)+\log(1+t)-\frac{n(n+1)}{2}]
$$ whenever $\nu<0$.} In the present paper, we will get a local version of
this Hamilton-Ivey type pinching estimate for the gradient shrinking
solitons with vanishing Weyl tensor (without curvature bound). The
idea is to use the methods of \cite{Chen}. But since the curvature
operator is more complicated on high dimension, we firstly need to
prove an algebra lemma. Based on this pinching estimate, we will
obtain the following complete classification theorem (without any
curvature bound assumption)

\begin{theorem}
Any complete gradient shrinking soliton with vanishing Weyl tensor
must be the finite quotients of $R^n$, $S^{n-1}\times R$, or $S^n$.
\end{theorem}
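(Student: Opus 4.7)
The plan is to follow the strategy pioneered by Cao-Chen-Zhu in dimension three, as outlined in the introduction. I would first establish a local version of the higher-dimensional Hamilton-Ivey pinching estimate for Ricci flows with vanishing Weyl tensor (the main new technical ingredient the author promises in the previous paragraph), then use the self-similar structure of the shrinking soliton to deduce that the curvature operator is nonnegative everywhere, and finally invoke the classification of Ni-Wallach \cite{NW} or Petersen-Wylie \cite{PW} to identify the soliton.

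First, I would promote $(M,g,f)$ to its canonical self-similar Ricci flow $g(t)=(1-2\lambda t)\phi_{t}^{\ast}g$, defined on $(-\infty,\tfrac{1}{2\lambda})$, where $\phi_{t}$ is the flow of a suitably rescaled gradient of $f$. Vanishing Weyl is preserved along this flow, and the lowest eigenvalue $\nu$ of the curvature operator scales homogeneously with the metric. Next, adapting Chen's maximum-principle argument from \cite{Chen} to higher dimensions, I would prove a local pinching estimate of the form
$$R(x,t)\ge(-\nu(x,t))\bigl[\log(-\nu(x,t))+\log(1+(t-t_{0}))-C(n)\bigr]$$
on shrinking parabolic neighborhoods, under the assumption $\nu\ge -1$ at the initial time $t_{0}$ on an enclosing ball. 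The key new input is the algebraic lemma on Weyl-free curvature operators alluded to in the introduction: it should control the reaction term $Rm^{2}+Rm^{\#}$ in the evolution equation for $Rm$ in terms of the scalar curvature, in precisely the way Hamilton's 3-dimensional argument demands, so that the pinching cone is preserved by the associated ODE.

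With the local pinching in hand, I would fix a point $p\in M$ and let $t_{0}\to-\infty$. By self-similarity, curvatures on any fixed compact set in $g(t_{0})$ become small as $|t_{0}|\to\infty$, so the initial hypothesis $\nu\ge -1$ is automatically met on parabolic neighborhoods of $(p,0)$ of any chosen size. Applying the pinching at $t=0$ and letting $t_{0}\to-\infty$ sends $\log(1-t_{0})\to\infty$; since $R(p,0)$ is finite (in fact bounded by $2\lambda f(p)$ via the standard shrinker identity $R+|\nabla f|^{2}=2\lambda f+\mathrm{const}$), this forces $\nu(p,0)\ge 0$. Hence $(M,g)$ has nonnegative curvature operator, and in particular nonnegative Ricci curvature.

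Finally, nonnegative curvature operator together with the bound $R\le 2\lambda f$ and the quadratic growth of $f$ due to Cao-Zhou implies that the Ricci curvature grows at most quadratically in $r(x)$, which is well within the hypotheses of Petersen-Wylie \cite{PW} (and also of Ni-Wallach \cite{NW}). Their classification then identifies the universal cover of $M$ as $R^{n}$, $S^{n-1}\times R$, or $S^{n}$, and $(M,g)$ is a finite quotient, completing the proof. I expect the hardest step to be the algebraic lemma combined with the local Hamilton-Ivey derivation: unlike in three dimensions, the curvature operator has many more independent components, and one must fully exploit the vanishing of the Weyl tensor to reduce the reaction dynamics of $Rm^{2}+Rm^{\#}$ to a scalar-curvature dominated form amenable to Chen's maximum-principle technique.
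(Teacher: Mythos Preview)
Your outline is correct and follows the same three-step strategy as the paper: local Hamilton--Ivey pinching, passage to the ancient limit to force $\nu\ge 0$, then the Ni--Wallach/Petersen--Wylie classification once a curvature growth bound is in hand. The paper's execution of the first step differs from what you sketch, however. Rather than the logarithmic inequality $R\ge(-\nu)[\log(-\nu)+\log(1+t)-C(n)]$, the paper proves by induction on $m$ the family of localized estimates $R+m\nu\ge\min\{-C_m/(t+K_m^{-1}),\,-C_m/(Ar_0^2)\}$ on nested balls (Proposition~3.2), and the algebraic Lemma~2.1 is built specifically to drive this induction: it bounds from below the reaction term arising in the evolution of $N_{\alpha\beta}=Rg_{\alpha\beta}+(m{+}1)M_{\alpha\beta}$ using only a lower bound on $R+m\nu$. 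Sending first $A\to\infty$ and then the ancient time $T\to\infty$ gives $R+m\nu\ge 0$ for every $m$, hence $\nu\ge 0$ (Corollary~3.3). For the growth estimate the paper does not invoke Cao--Zhou but simply integrates $\nabla^2 f\le\tfrac12 g$ along geodesics to obtain $R\le f\le e^{a(d^2+1)}$ directly (Proposition~3.4). Your logarithmic pinching would reach the same endpoint if it could be localized, but the inductive $R+m\nu$ scheme is what the paper's algebra lemma is actually designed for, and it sidesteps having to control the awkward quantity $(-\nu)\log(-\nu)$ near the cutoff boundary.
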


This paper contains three sections and the organization is as
follows. In section 2, we will prove an algebra lemma which will be
used to prove the local version of the Hamilton-Ivey type pinching
estimate. In section 3, we will give some propositions and finish
the proof of the theorem 1.2.

\vskip 0.3cm \noindent {\bf Acknowledgement}  I would like to thank
my advisor Professor X.P.Zhu for his encouragement and many helpful
suggestions. I would also like to thank Professor B.L.Chen for
helpful discussions.

\section{An Algebra Lemma}

Let $x_1,\ldots,x_n$ be any $n(\ge 4)$ real numbers, $m$ be any
positive integer, and denote by
$M_{ij}\stackrel{\mathrm{\Delta}}{=}x_i+x_j$. Define a function as
follow
$$\arraycolsep=1.5pt\begin{array}{rcl}
f(x_1,\cdots,x_n)
\stackrel{\mathrm{\Delta}}{=}&&-\frac{\sum\limits_{i<j,\{i,j\}\ne\{1,2\}}M_{ij}}{m+1}
                                                 \bigg[\sum\limits_{i<j \atop \{i,j\}\ne\{1,2\}}M_{ij}+(m+1)M_{12}\bigg]
                                                 -M_{12}\sum\limits_{i<j \atop \{i,j\} \ne \{1,2\}}M_{ij}    \\[4mm]
                  &+& \bigg[\sum\limits_{i<j \atop \{i,j\}\ne\{1,2\}}(M_{ij}^2+\sum\limits_{k\ne i,j} M_{ik}M_{jk})
                  +(m+1)\sum\limits_{k\ne 1,2} M_{1k}M_{2k}\bigg] .  \\[4mm]
\end{array}$$

\begin{lemma}
Let $\rho$ be a nonnegative constant,
 then there exist a nonnegative constant $C(m,n)$, such that $f \ge -C(m,n)\rho^2$ under the following
 assumptions :\\
 (i) $x_1 \le x_2 \le \min\limits_{3 \le i \le n}x_i $;\\
 (ii) $\sum\limits_{i<j \atop \{i,j\}\ne\{1,2\}}M_{ij}+m M_{12}\ge -\rho
 $;\\
 (iii) $\sum\limits_{i<j \atop \{i,j\}\ne\{1,2\}}M_{ij}+(m+1)M_{12}< -(m+1)(m+n-1)\rho $.
\end{lemma}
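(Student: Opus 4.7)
The natural approach is to reduce $f$ to a manageable form, then exploit assumptions (ii) and (iii) to dominate its negative part. Introduce the row-sums
\[\tau_k := \sum_{i\ne k} M_{ik} = \sigma + (n-2)x_k, \qquad \sigma := \sum_i x_i,\]
and verify, by swapping the order of summation and using $\sum_{i<j,\, i,j\ne k} M_{ik}M_{jk} = \tfrac{1}{2}(\tau_k^2 - \sum_{i\ne k}M_{ik}^2)$, the sum-of-squares identity
\[\sum_{i<j}\Bigl(M_{ij}^2 + \sum_{k\ne i,j} M_{ik}M_{jk}\Bigr) = \tfrac{1}{2}\sum_{k=1}^n \tau_k^2.\]
Isolating the $(1,2)$ contribution from this identity and completing the square in $S := \sum_{i<j,\, \{i,j\}\ne\{1,2\}} M_{ij}$ with respect to $T := M_{12}$ inside the definition of $f$ should yield the compact form
\[f \;=\; -\frac{W^2}{m+1} \;+\; mT^2 \;+\; \tfrac{1}{2}\sum_{k=1}^n \tau_k^2 \;+\; mB, \qquad W := S + (m+1)T,\]
where $B = \sum_{k\ne 1,2} M_{1k}M_{2k}$. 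In this form $f$ is a sum of nonnegative background terms modulated by a single negative term $-W^2/(m+1)$ and the indefinite $mB$.

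Next, extract the quantitative content of the hypotheses. Subtracting (ii) from (iii) gives the key one-sided bound $T < -[(m+1)(m+n-1) - 1]\rho$, so $T$ is always strictly negative and $|T| \gtrsim \rho$; moreover (iii) is itself $-W > (m+1)(m+n-1)\rho$, and (i) forces $T = M_{12}$ to be the minimum among all $M_{ij}$. For the indefinite $B$, the factorization $(x_1+x_k)(x_2+x_k) = (x_k + T/2)^2 - (x_1-x_2)^2/4$ exhibits
\[B \;=\; \sum_{k=3}^n \bigl(x_k + \tfrac{T}{2}\bigr)^2 \;-\; \tfrac{n-2}{4}(x_1-x_2)^2,\]
a sum of squares minus a controlled correction governed by the spread between $x_1$ and $x_2$.

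To conclude, I would parameterize $(x_1,x_2,x_3,\ldots,x_n)$ by $T$, $u := (x_2-x_1)/2$, and $w_k := x_k - x_2$ for $k \ge 3$; in these coordinates (i) becomes the manifest nonnegativity $u,\,w_k \ge 0$, the assumptions (ii), (iii) are two linear inequalities, and each of $W$, $\tau_k$, $B$ is an explicit polynomial. The plan is then to produce a Positivstellensatz-style decomposition, writing $f + C(m,n)\rho^2$ as a nonnegative combination of squares together with nonnegative multiples of $u$, of each $w_k$, of the slack $(S + mT + \rho)$, and of $(-W - (m+1)(m+n-1)\rho)$. The main obstacle is locating this explicit decomposition: the coefficient $(m+1)(m+n-1)$ appearing in (iii) is tuned precisely so that the indefinite cross terms that appear when $-W^2/(m+1)$ is expanded in $(T,u,w_k)$ can be absorbed by suitable positive combinations of the constraints and the ordering. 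Matching these combinations is the algebraic heart of the lemma and is what forces the separate treatment in this section.
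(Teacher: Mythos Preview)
Your compact identity
\[
f \;=\; -\frac{W^{2}}{m+1} \;+\; m\,T^{2} \;+\; \tfrac{1}{2}\sum_{k=1}^{n}\tau_{k}^{2} \;+\; m\,B
\]
is correct and is a genuinely pleasant reformulation. But the proposal stops exactly where the work begins. You acknowledge that ``the main obstacle is locating this explicit decomposition'' and then do not locate it; what remains is a hope that some nonnegative combination of squares and of the slacks $u$, $w_{k}$, $S+mT+\rho$, $-W-(m+1)(m+n-1)\rho$ will absorb $-W^{2}/(m+1)+mB$. Nothing in the write-up shows that such a combination exists, let alone with a computable constant $C(m,n)$. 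Note that $|W|$ is \emph{not} bounded by a multiple of $\rho$ (only bounded \emph{below}), so $-W^{2}/(m+1)$ is not an $O(\rho^{2})$ term that can be thrown into the constant; it has to be cancelled structurally by the other pieces, and you have not exhibited that cancellation. As written this is a sketch of a strategy, not a proof.

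The paper's argument supplies precisely the missing idea, and it is different from a Positivstellensatz search. One first observes that $f$ and all three hypotheses are symmetric in $x_{3},\ldots,x_{n}$, and that replacing any pair $x_{p},x_{q}$ (with $p,q\ge 3$) by their common average can only \emph{decrease} $f$ while leaving (i)--(iii) intact. This symmetrization reduces the problem to the three-parameter case $x_{3}=\cdots=x_{n}$, after which $f$ splits as $I+II$ with $I\ge 0$ by a short chain of sign observations derived from (ii)--(iii), and $II$ is handled by an explicit case analysis (small $m$ versus $m\ge 3$, and then the sign of $M_{13}M_{23}$). The decisive step is the averaging reduction: without it you are trying to find an SOS certificate in $n$ variables subject to $n$ linear constraints, which is exactly the obstacle you ran into; with it the problem collapses to three scalars and elementary inequalities. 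If you want to salvage your approach, the cleanest route is to prove that same monotonicity of $f$ under averaging of $x_{3},\ldots,x_{n}$ (in your variables this is convexity in the $w_{k}$ along the trace-zero directions), and then carry out your decomposition only in the reduced three-variable problem.
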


\begin{proof}
We firstly claim that $f(\ldots,x_3,x_4,\ldots)\ge
f(\ldots,x,x,\ldots)$, where $2x=x_3+x_4$.

Indeed, without loss of generality, we can assume $x_3<x_4$. Let
$2\delta=x_4-x_3$, then by direct computation we get ($M_0=x$)
$$\arraycolsep=1.5pt\begin{array}{rcl}
&&f(\ldots,x_3,x_4,\ldots)- f(\ldots,x,x,\ldots)\\[4mm]
&&\hskip 1cm=\sum\limits_{k\ne3,4}(M_{3k}M_{4k}-M_{0k}^2)
             +\sum\limits_{i<j,\{i,j\} \ne \{1,2\} \atop \{i,j\}\ne\{3,4\}}(M_{3i}M_{3j}+M_{4i}M_{4j}-2M_{0i}M_{0j}))
           \\[4mm]
&&\hskip 2cm+\sum\limits_{i \ne 3,4}(M_{3i}^2+\sum\limits_{k
                           \ne3,i}M_{3k}M_{ik}-M_{0i}^2-\sum\limits_{k \ne3,i}M_{0k}M_{ik})\\[4mm]
&&\hskip 2cm+\sum\limits_{i \ne 3,4}(M_{4i}^2+\sum\limits_{k
                           \ne4,i}M_{4k}M_{ik}-M_{0i}^2-\sum\limits_{k \ne4,i}M_{0k}M_{ik})\\[4mm]
&&\hskip 1.5cm+(m+1)(M_{13}M_{23}+M_{14}M_{24}-2M_{10}M_{20})\\[4mm]
&&\hskip 1cm=\sum\limits_{k\ne3,4}(-\delta^2)
                   +\sum\limits_{i<j,\{i,j\} \ne \{1,2\} \atop \{i,j\}\ne\{3,4\}}2\delta^2
                   +\sum\limits_{i \ne 3,4}2\delta^2
                   +(m+1)2\delta^2\\[4mm]
&&\hskip 1cm\geq 0.
\end{array}$$

 Note that $f$ and
the assumptions are symmetric with respect to $x_3,\ldots,x_n$. By
the above claim, we only need to prove the special case that
$x_3=\cdots=x_n$. In this case,
$$\sum\limits_{i<j \atop \{i,j\}\ne\{1,2\}}M_{ij}=(n-2)\big(M_{12}+\frac{n-1}{2}M_{33}\big),$$

and
$$\arraycolsep=1.5pt\begin{array}{rcl}
 f(x_1, x_2, x_3, \ldots , x_3)
  &=& -\frac{(n-2)\big(M_{12}+\frac{n-1}{2}M_{33}\big)}{m+1}
                                                 \bigg[\sum\limits_{i<j \atop \{i,j\} \ne \{1,2\}}M_{ij}+(m+1)M_{12}\bigg] \\[4mm]
  &&+\frac{n-2}{2}\Big[(n-1)M_{13}^2+(n-1)M_{23}^2+(n-1)(n-3)M_{33}^2\\[4mm]
  &&\hskip 1.2cm +(n-3)M_{12}M_{33}+2(m+1)M_{13}M_{23}\Big] \\[4mm]
  &=& I+II. \\[4mm]
\end{array}$$

Clearly, $-(m+1)(m+n-1)\rho\le -2\rho\le0$, and we have some
estimates in the following assertion.

\vskip 0.1cm \noindent{\bf Claim} \emph{The following inequalities
hold\\
(1) $M_{12}<-\rho\le0$;\\
(2) $M_{33}>0$;\\
(3) $M_{12}+\frac{n-1}{2}M_{33}>0$;\\
(4) $(m+n-1)(-M_{12})\ge \frac{(n-1)(n-2)}{2}M_{33}$;\\
(5) $\frac{(n-1)(n-2)}{2}M_{33}\ge -\rho-(m+n-2)M_{12}$;\\
(6) $(n-2)\big(M_{12}+\frac{n-1}{2}M_{33}\big)\ge(m-1)(-M_{12})$.}

Obviously, by combining the assumptions (ii) and (iii), we get
(1)-(3).

Then by (iii), we have
$(n-2)\big(M_{12}+\frac{n-1}{2}M_{33}\big)+(m+1)M_{12}\le0$, so we
get (4).

Now by (ii), $(n-2)\big(M_{12}+\frac{n-1}{2}M_{33}\big) \ge
-\rho-mM_{12}(\ge 0)$, we have (5).

(6) follows from (5) immediately.

And we have proved the assertion.

By the above claim we know that $I$ is always nonnegative. In the
following, we will divede the argument into two cases to consider.

Case 1) $m=1, 2$. Then we have
$$\arraycolsep=1.5pt\begin{array}{rcl}
II &\ge& \frac{n-2}{2}\bigg\{3M_{13}^2+3M_{23}^2-6|M_{13}M_{23}| \\[4mm]
&&\hskip
1cm+(n-3)\Big[M_{12}+\frac{n-1}{2}M_{33}\Big]M_{33}+\frac{(n-1)(n-3)}{2}M_{33}^2
\bigg\}\\[4mm]
&\ge&0 \\[4mm]
\end{array}$$
In this case, Lemma 2.1 is proved.

Case 2) $m \ge 3$. It suffices to prove that $M_{13}M_{23}< 0$, that
is to say, $M_{13}<0,\ M_{23}>0$, which implies $-M_{12}M_{33}\ge
-M_{13}M_{23}>0$. (Indeed, if $M_{13}M_{23}\ge 0$, it is easy to see
that II is positive , therefore we have proved the lemma 2.1. )

then we have
$$\arraycolsep=1.5pt\begin{array}{rcl}
I &\ge& \frac{(m-1)(-M_{12})}{m+1}(m+1)(m+n-1)\rho \\[4mm]
  &\ge& (m-1)\frac{(n-1)(n-2)}{2}\rho M_{33} \\[4mm]
  &\ge& (n-3)\rho M_{33}, \\[4mm]
II &\ge&\frac{n-2}{2}\Big\{(n-1)M_{13}^2+(n-1)M_{23}^2+2(m+1)M_{13}M_{23}  \\[4mm]
    &&\hskip 1.1cm +\frac{n-3}{n-2}M_{33}[-\rho-mM_{12}]+\frac{n-3}{n-2}M_{33}[-\rho-(m+n-2)M_{12}] \Big\} \\[4mm]
    &\ge& \frac{n-2}{2}\Big\{(n-1)M_{13}^2+(n-1)M_{23}^2+2(m+1)M_{13}M_{23} \\[4mm]
    &&\hskip 1.1cm-2(m+1)\frac{n-3}{n-2}M_{12}M_{33}-2\frac{n-3}{n-2}\rho M_{33}\Big\}  \\[4mm]
    &\ge&\frac{n-2}{2}\Big\{(n-1)M_{13}^2+(n-1)M_{23}^2+\frac{2(m+1)}{n-2}M_{13}M_{23} \Big\} \\[4mm]
    &&\hskip 0.2cm-(n-3)\rho M_{33}. \\[4mm]
\end{array}$$
Therefore
$f\ge\frac{(n-1)(n-2)}{2}\Big[M_{13}^2+M_{23}^2+2\frac{m+1}{(n-1)(n-2)}M_{13}M_{23}\Big]$.

If  $\frac{m+1}{(n-1)(n-2)}\le 1$, then $f\ge0$.

If  $\frac{m+1}{(n-1)(n-2)}> 1$, and
$M_{23}+2\frac{m+1}{(n-1)(n-2)}M_{13}\ge0$, we also have $f\ge0$.

Otherwise, $m+1>(n-1)(n-2)$, and
$M_{13}<-\frac{(n-1)(n-2)}{2(m+1)}M_{23}$.

Since
$M_{12}+\frac{n-1}{2}M_{33}=M_{13}+M_{23}+\frac{n-3}{2}M_{33}$, by
(ii), we get
$$\frac{(n-1)(n-2)}{2} \frac{2(m+1)-(n-1)(n-2)}{2(m+1)}M_{23}+[m-\frac{(n-2)(n-3)}{2}]M_{12}\ge-\rho,$$
So
$$\arraycolsep=1.5pt\begin{array}{rcl}
  -M_{12} &\le&
\frac{(n-1)(n-2)}{2(m+1)}\frac{2(m+1)-(n-1)(n-2)}{2m-(n-2)(n-3)}M_{23}+\frac{2}{2m-(n-2)(n-3)}\rho \\[4mm]
&\le&\frac{(n-1)(n-2)}{2(m+1)}M_{23}+\frac{1}{n}\rho ,\\[4mm]
\end{array}$$
and then
$$\arraycolsep=1.5pt\begin{array}{rcl}
f&\ge&\frac{(n-1)(n-2)}{2}\Big\{[\frac{(n-1)(n-2)}{2(m+1)}M_{23}]^2-\frac{2(m+1)}{n(n-1)(n-2)}\rho M_{23}\Big\}\\[4mm]
&\ge&-C(m,n)\rho^2 .\\[4mm]
\end{array}$$
where $C(m,n)$ is a constant depending only on $m$ and $n$.

Therefore we have proved the Lemma.
\end{proof}

\section{The Proof of the Main Theorem}

Let $(M,g,f)$ be a smooth gradient shrinking soliton, then by using
the contracted second Bianchi identity we get the equation
$R+|{\nabla}f|^2-2{\lambda}f=const$. Obviously, by rescaling $g$ and
changing $f$ by a constant we can assume $\lambda=\frac{1}{2}$ and
$R+|{\nabla}f|^2-f=0$. We call such a soliton normalized, and $f$ a
normalized soliton function.

In terms of moving frames \cite{Ha86} of the Ricci flow, the
curvature operator $M_{\alpha\beta}$ has the following evolution
equation
$$\frac{\partial}{\partial t}M_{\alpha\beta}=\triangle
M_{\alpha\beta}+M_{\alpha\beta}^2+M_{\alpha\beta}^{\#} ,$$ where
$M_{\alpha\beta}^{\#}$ is the lie algebra adjoint of
$M_{\alpha\beta}$. In general, we know little about
$M_{\alpha\beta}^{\#}$. However, when the metric is conformally
flat, we have the following proposition.

\begin{proposition}
Suppose we have a smooth solution $g_{ij}(x,t)$ of the Ricci flow on
an $n$-dimensional manifold $M$, and suppose at $t=t_0$ the metric
$g_{ij}(x,t_0)$ has vanishing Weyl tensor. Then at $t=t_0$, for any
point $p$, there exist an orthonormal basis $\{e_i\}$ and $n$ real
numbers $M_i$ such that $\{\phi^{\alpha}=\sqrt{2}e_i\wedge
e_j\}(i<j)$ is an orthonormal basis of $\wedge^2 T_p{M}$, and we
have

(i) $M_{\alpha\beta}=
      \left\{
       \begin{array}{ll}
         M_{ij}\stackrel{\mathrm{\Delta}}{=}M_i+M_j\ ,\qquad &if\ \phi^{\alpha}=\phi^{\beta}=\sqrt{2}e_i\wedge e_j ; \\[4mm]
         0\ ,\qquad &if\ \alpha\neq\beta .
       \end{array}
    \right.
    $

(ii) $M_{\alpha\beta}^{\#}=
      \left\{
       \begin{array}{ll}
         \sum\limits_{k\ne i,j} M_{ik}M_{jk}\ ,\qquad &if\ \phi^{\alpha}=\phi^{\beta}=\sqrt{2}e_i\wedge e_j ; \\[4mm]
         0\ ,\qquad &if\ \alpha\neq\beta .
       \end{array}
    \right.
    $
\end{proposition}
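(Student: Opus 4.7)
The plan is to reduce everything to linear algebra at the single point $p$, using the vanishing of the Weyl tensor. I would proceed in three stages: diagonalize Ricci, read off the curvature operator from the Weyl decomposition, and then compute $M^{\#}$ using the Lie-algebra structure on $\wedge^{2}T_{p}M\cong\mathfrak{so}(n)$.

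First, at $p$ I choose an orthonormal basis $\{e_{i}\}$ of $T_{p}M$ diagonalizing the Ricci tensor, $R_{ij}=\lambda_{i}\delta_{ij}$. Because $W\equiv 0$ at $t=t_{0}$, the full Riemann tensor is recovered from the Ricci and scalar curvature by the standard decomposition
$$R_{ijkl}=\tfrac{1}{n-2}\bigl(R_{ik}\delta_{jl}-R_{il}\delta_{jk}+R_{jl}\delta_{ik}-R_{jk}\delta_{il}\bigr)-\tfrac{R}{(n-1)(n-2)}\bigl(\delta_{ik}\delta_{jl}-\delta_{il}\delta_{jk}\bigr).$$
Substituting the diagonal Ricci, the Kronecker combinations force $R_{ijkl}=0$ whenever $\{i,j\}\neq\{k,l\}$ as unordered pairs. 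Setting $M_{i}:=\frac{\lambda_{i}}{n-2}-\frac{R}{2(n-1)(n-2)}$, a direct computation gives $R_{ijij}=M_{i}+M_{j}$. Interpreting the curvature operator as the symmetric bilinear form on $\wedge^{2}T_{p}M$ whose matrix in the orthonormal basis $\{\phi^{ij}=\sqrt{2}\,e_{i}\wedge e_{j}\}_{i<j}$ is read off from these $R_{ijkl}$'s, assertion (i) follows.

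For (ii), I would identify $\wedge^{2}T_{p}M$ with $\mathfrak{so}(n)$ in the usual way and compute the brackets: $[\phi^{ij},\phi^{kl}]$ is nonzero exactly when $\{i,j\}$ and $\{k,l\}$ share one index, and in that case it equals a scalar multiple of $\phi^{mn}$ where $\{m,n\}$ is the symmetric difference. Using Hamilton's definition $M^{\#}_{\alpha\beta}=\sum_{\gamma\delta\epsilon\zeta}C^{\alpha}_{\gamma\delta}C^{\beta}_{\epsilon\zeta}M^{\gamma\epsilon}M^{\delta\zeta}$ in terms of the structure constants, the diagonality of $M$ collapses the sum to $M^{\#}_{\alpha\beta}=\sum_{\gamma,\delta}C^{\alpha}_{\gamma\delta}C^{\beta}_{\gamma\delta}M_{\gamma}M_{\delta}$. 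Since an unordered pair of basis elements has at most one bracket target, two different $\alpha,\beta$ cannot simultaneously arise from the same pair $(\gamma,\delta)$, so the off-diagonal entries vanish and $M^{\#}$ is diagonal. The only pairs that contribute to the $(ij,ij)$-entry are $(\phi^{ik},\phi^{jk})$ for $k\neq i,j$, each of which, after accounting for the $\sqrt{2}$ normalization that makes the $\phi^{ij}$ orthonormal, contributes a single term $M_{ik}M_{jk}$. Summing gives precisely $\sum_{k\neq i,j}M_{ik}M_{jk}$, which is (ii).

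The main obstacle is the normalization bookkeeping in the last step: the $\sqrt{2}$ in $\phi^{ij}=\sqrt{2}\,e_{i}\wedge e_{j}$ must be threaded through the identification $\wedge^{2}\cong\mathfrak{so}(n)$ and through the definition of $\#$ so that the structure constants have exactly the magnitude producing the clean formula with no extraneous factors. This is a standard but delicate Hamilton-style calculation; conceptually all the work is done once one notices that vanishing Weyl forces $R_{ijkl}$ to be supported on diagonal pairs in the Ricci-diagonalizing frame.
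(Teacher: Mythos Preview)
Your proposal is correct and follows essentially the same route as the paper: diagonalize Ricci, use the Weyl decomposition to see that $R_{ijkl}$ vanishes off the diagonal pairs and has the additive form on them, then compute $M^{\#}$ from the $\mathfrak{so}(n)$ structure constants after collapsing the sum using the diagonality of $M$. The only discrepancy is a harmless factor of two in your choice of $M_{i}$ (the paper takes $M_{i}=\tfrac{2\lambda_{i}}{n-2}-\tfrac{R}{(n-1)(n-2)}$, reflecting Hamilton's moving-frame normalization of the curvature operator), which is exactly the bookkeeping issue you flag at the end.
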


\begin{proof}
Indeed, suppose $\{e_i\}$ is an orthonormal basis, which
diagonalizes Ricci tensor, i.e., $Ric(e_i)=\lambda_i e_i$.

Because the Weyl tensor vanish, we have
$$R_{ijkl}=\frac{1}{n-2}(R_{ik}g_{jl}+R_{jl}g_{ik}-R_{il}g_{jk}-R_{jk}g_{il})-\frac{1}{(n-1)(n-2)}R(g_{ik}g_{jl}-g_{il}g_{jk}).$$
Thus
$R_{ijij}=\frac{\lambda_i+\lambda_j}{n-2}-\frac{1}{(n-1)(n-2)}R$,
and $R_{ijkl}=0$ if three of the indices are mutually distinct. Let
$M_i=\frac{2\lambda_i}{n-2}-\frac{1}{(n-1)(n-2)}R$, and we have
proved (i).

Note that
$M_{\alpha\beta}^{\#}=C_{\alpha}^{\gamma\eta}C_{\beta}^{\delta\theta}M_{\gamma\delta}M_{\eta\theta}=C_{\alpha}^{\gamma\eta}C_{\beta}^{\gamma\eta}M_{\gamma\gamma}M_{\eta\eta}$,
where $[\phi^{\alpha},
\phi^{\beta}]=C_{\gamma}^{\alpha\beta}\phi^{\gamma}$.

Let $A_{ij}(i\neq j)$ denote by the matric with $(A_{ij})_{ij}=1,\
(A_{ij})_{ji}=-1$, and any other elements are 0. Then $[A_{ij},
A_{jk}]=A_{ik}$, if $i<j<k$.

By direct computation, we have $M_{\alpha\beta}^{\#}=0$, if
$\alpha\neq\beta$. And If $\alpha=\beta=\sqrt{2}e_i\wedge e_j(i<j)$,
we have
$$\arraycolsep=1.5pt\begin{array}{rcl}
    &&M_{\alpha\alpha}^{\#}
    = (C_{\alpha}^{\gamma\delta})^2 M_{\gamma\gamma}M_{\delta\delta}
    =<[\frac{\sqrt{2}}{2}A_{ij}, \phi^{\gamma}], \phi^{\delta}>^2M_{\gamma\gamma}M_{\delta\delta} \\[4mm]
    &=& \sum\limits_{k\neq i,j}{<[\frac{\sqrt{2}}{2}A_{ij}, \frac{\sqrt{2}}{2}A_{ki}], \phi^{\delta}>^2M_{ki}M_{\delta\delta}}
    +\sum\limits_{k\neq i,j}{<[\frac{\sqrt{2}}{2}A_{ij}, \frac{\sqrt{2}}{2}A_{kj}], \phi^{\delta}>^2M_{kj}M_{\delta\delta}} \\[4mm]
    &=& \frac{1}{2}\sum\limits_{k\neq i,j}{<\frac{\sqrt{2}}{2}A_{jk}, \phi^{\delta}>^2M_{ki}M_{\delta\delta}}
    +\frac{1}{2}\sum\limits_{k\neq i,j}{<\frac{\sqrt{2}}{2}A_{ik}, \phi^{\delta}>^2M_{kj}M_{\delta\delta}} \\[4mm]
    &=& \sum\limits_{k\ne i,j}M_{ik}M_{jk}.
\end{array}$$

\end{proof}

Now, combing Lemma 2.1 and Porposition 3.1, we are ready to prove
the local version of the Hamilton-Ivey type pinching estimate. The
basic idea is to use the methods of \cite{Chen}.

\begin{proposition}
For any nonnegative integer $m$, there is a constant $C_{m}$
depending only on $m$ and $n$ satisfying the following property.
Suppose we have a complete gradient shrinking soliton $(M^n,
g_{ij}(x,t))(n\ge4)$ on $[0, T]$ with vanishing Weyl tensor. And
assume that $Ric(x,t)\le(n-1)r_0^{-2}$ for $x\in B_t(x_0,Ar_0)$,
$t\in[0,T]$ and $R+m\nu\ge-K_m(K_m\ge0)$ on $B_0(x_0,Ar_0)$ at
$t=0$, where $\nu$ denote by the least eigenvalue of the curvature
operator. Then we have
$$R(x,t)+m\nu\ge min\{-\frac{C_m}{t+\frac{1}{K_m}}, -\frac{C_m}{Ar_0^2}\}, if A\ge2 ,$$
whenever $x\in B_t(x_0, \frac{1}{2}Ar_0)$, $t\in[0,T]$.
\end{proposition}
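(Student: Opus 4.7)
The plan is to prove this by induction on $m$, mimicking the localization argument of Chen \cite{Chen} in dimension three but using Lemma 2.1 in place of the three-dimensional algebraic miracle that underpinned Chen's original proof. The base case $m=0$ is the well-known local lower bound for scalar curvature: from $\partial_t R\ge\Delta R+\frac{2}{n}R^2$ (Cauchy--Schwarz on $|\mathrm{Ric}|^2$), a Perelman-type spatial cutoff $\varphi$ supported in $B_t(x_0,Ar_0)$---whose $\Delta\varphi$ and $|\nabla\varphi|^2/\varphi$ are controlled by the Ricci upper bound via the standard estimate for $(\partial_t-\Delta)d_t(x,x_0)$---combined with the ODE comparison for $R\varphi+h(t)$, yields the claimed bound when $m=0$.

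For the inductive step, set $u:=R+m\nu$ and use Proposition 3.1 to diagonalize the curvature operator, writing its eigenvalues as $M_{ij}=M_i+M_j$ with $M_1\le M_2\le\cdots\le M_n$, so that $\nu=M_{12}$. The evolution $\partial_t M_{\alpha\beta}=\Delta M_{\alpha\beta}+M_{\alpha\beta}^2+M_{\alpha\beta}^{\#}$ together with Hamilton's barrier argument for the least eigenvalue gives, in the viscosity sense,
\[
\partial_t u\ \ge\ \Delta u+\mathcal{R},\qquad
\mathcal{R}=\sum_{i<j}\bigl(M_{ij}^2+M_{ij}^{\#}\bigr)+m\bigl(M_{12}^2+M_{12}^{\#}\bigr).
\]
Setting $A=\sum_{\{i,j\}\ne\{1,2\}}M_{ij}$ and $B=M_{12}$ (so that $u=A+(m+1)B$), a direct calculation using the completed-square identity $\frac{A^2}{m+1}+2AB+(m+1)B^2=\frac{u^2}{m+1}$ shows $\mathcal{R}=f(M_1,\ldots,M_n)+\frac{u^2}{m+1}$, where $f$ is precisely the function appearing in Lemma 2.1. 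Hence
\[
\partial_t u\ \ge\ \Delta u+f+\frac{u^2}{m+1}.
\]

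By the inductive hypothesis, on $B_t(x_0,\frac12 Ar_0)$ one has $R+(m-1)\nu\ge -\rho(t)$, where $\rho(t):=\min\{C_{m-1}/(t+K_{m-1}^{-1}),\ C_{m-1}/(Ar_0^2)\}$; rewriting this as $A+mB\ge -\rho$ gives hypothesis (ii) of Lemma 2.1. At any point where $u<-(m+1)(m+n-1)\rho$, hypothesis (iii) also holds, while (i) is automatic from the ordering of the $M_i$. Lemma 2.1 then yields $f\ge -C(m,n)\rho^2$, so on the bad region
\[
\partial_t u\ \ge\ \Delta u+\frac{u^2}{m+1}-C(m,n)\rho^2.
\]
Applying the same cutoff-and-ODE-comparison machinery as in the base case to the quantity $u\varphi+h(t)$, the cutoff errors are absorbed into the $-C_m/(Ar_0^2)$ term and the perturbation $-C(m,n)\rho^2$ into $-C_m/(t+K_m^{-1})$, closing the induction after choosing $C_m$ sufficiently large compared with $C(m,n)C_{m-1}^2$.

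The main obstacle is the non-smoothness of $\nu$ at points where the least eigenvalue of $M_{\alpha\beta}$ has multiplicity greater than one. This is handled as in Hamilton's original derivation of the Hamilton--Ivey estimate: at a first spacetime minimum of $u\varphi+h$, one freezes an eigenvector realizing $M_{12}$ and extends it by parallel transport to a smooth neighborhood, obtaining $\partial_t\nu\ge\Delta\nu+M_{12}^2+M_{12}^{\#}$ in the required barrier sense; the simultaneous diagonalization of $M_{\alpha\beta}$ and $M_{\alpha\beta}^{\#}$ provided by Proposition 3.1 makes this possible. A further minor subtlety is the smoothing of $d_t$ inside $\varphi$, which is routine given the Ricci upper bound. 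Once both of these technicalities are in place the argument follows the same template as in dimension three, with Lemma 2.1 supplying the only genuinely new ingredient needed for $n\ge 4$.
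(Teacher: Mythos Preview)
Your proposal follows the paper's proof essentially line for line: induction on $m$, Chen's localization via a cutoff in $d_t$ controlled by Perelman's estimate on $(\partial_t-\Delta)d_t$, Proposition~3.1 to diagonalize, the rewriting of the reaction term as $\frac{u^2}{m+1}+f$, and Lemma~2.1 to bound $f\ge -C(m,n)\rho^2$ once the inductive hypothesis supplies assumption~(ii). The only substantive point you skate over is the \emph{nesting of the cutoffs}: as written, your inductive hypothesis gives $R+(m-1)\nu\ge-\rho$ only on $B_t(x_0,\tfrac12 Ar_0)$, but the spacetime minimum of $u\varphi$ can occur anywhere in the support of $\varphi$, so you need the step-$(m-1)$ bound on a strictly larger ball than the one on which you are trying to establish the step-$m$ bound. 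The paper handles this by strengthening the induction to balls of radius $(\tfrac12+2^{-(m+2)})Ar_0$, with $\varphi$ at step $m_0{+}1$ equal to $1$ on the $(\tfrac12+2^{-(m_0+3)})Ar_0$-ball and supported in the $(\tfrac12+2^{-(m_0+2)})Ar_0$-ball; you should make the same refinement. Also, your $\rho(t)$ should be a $\max$, not a $\min$, since $R+(m-1)\nu\ge\min\{-a,-b\}=-\max\{a,b\}$.
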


\begin{proof}
By \cite{Pere02}, we have
$$(\frac{\partial}{\partial
t}-\Delta)d_t(x_0,x)\ge-\frac{5(n-1)}{3}r_0^{-1},$$ whenever
$d_t(x_0,x)>r_0$, in the sense of support function.

We will argue by induction on $m$ to prove the estimate holds on
ball of radius $(\frac{1}{2}+\frac{1}{2^{m+2}})Ar_0$. The case $m=0$
follows from Theorem 1 of \cite{ZZH}. Suppose we have proved the
result for some $m_0(\ge 0)$, that is to say, there is a constant
$C_{m_0}$ such that
$$R(x,t)+m_0\nu\ge \min\{-\frac{C_{m_0}}{t+\frac{1}{K_{m_0}}}, -\frac{C_{m_0}}{Ar_0^2}\} ,$$
whenever $x\in B_t(x_0, (\frac{1}{2}+\frac{1}{2^{m_0+2}})Ar_0)$,
$t\in[0,T]$. We are going to prove the result for $m=m_0+1$ on ball
of radius $(\frac{1}{2}+\frac{1}{2^{m_0+3}})Ar_0$.

Without loss of generality, we may assume $K_0\le K_1\le K_2\le
\cdots$.

Define a function
$C_{m_0}(t)\stackrel{\mathrm{\Delta}}{=}\max\{\frac{C_{m_0}}{t+\frac{1}{K_{m_0}}},
\frac{C_{m_0}}{Ar_0^2}\}$.

Under the moving frame, let
$$N_{\alpha\beta}\stackrel{\mathrm{\Delta}}{=}Rg_{\alpha\beta}+(m_0+1)M_{\alpha\beta},\qquad P_{\alpha\beta}\stackrel{\mathrm{\Delta}}{=}\varphi(\frac{d_t(x,x_0)}{Ar_0})N_{\alpha\beta} ,$$
where $\varphi$ is a smooth nonnegative decreasing function, which
is 1 on $(-\infty, \frac{1}{2}+\frac{1}{2^{m_0+3}}]$, and 0 on
$[\frac{1}{2}+\frac{1}{2^{m_0+2}}, \infty )$.

By direct computation, we have
$$(\frac{\partial}{\partial t}-\Delta)P_{\alpha\beta}=-2\nabla_l \varphi\nabla_l N_{\alpha\beta}+Q_{\alpha\beta} ,$$
where $Q_{\alpha\beta}$ is
$$\arraycolsep=1.5pt\begin{array}{rcl}
Q_{\alpha\beta}
    &=& \Big[\varphi'\frac{1}{Ar_0}(\frac{\partial}{\partial t}-\Delta)d_t-\varphi''\frac{1}{(Ar_0)^2} \Big]N_{\alpha\beta} \\[4mm]
    &&\hskip 0.1cm +\varphi\Big[g_{\alpha\beta}(\frac{\partial}{\partial t}-\Delta)R+(m_0+1)(\frac{\partial}{\partial t}-\Delta)M_{\alpha\beta}  \Big]\\[4mm]
    &=& \Big[\varphi'\frac{1}{Ar_0}(\frac{\partial}{\partial t}-\Delta)d_t-\varphi''\frac{1}{(Ar_0)^2} \Big]N_{\alpha\beta} \\[4mm]
    &&\hskip 0.1cm +\varphi g_{\alpha\beta}\Big[\sum\limits_{i<j}(M_{ij}^2+\sum\limits_{k\neq i,j}M_{ik}M_{jk})+(m_0+1)(M_{i_0j_0}^2+\sum\limits_{k\neq i_0,j_0}M_{i_0k}M_{j_0k})  \Big],\\[4mm]
\end{array}$$
if $\phi^{\alpha}=\sqrt{2}e_{i_0}\wedge e_{j_0}$ and the second
equality follows from the proposition 3.1.

Note that the least eigenvalue of $N_{\alpha\beta}$ is
$R+(m_0+1)\nu$.

Let$$u(t)\stackrel{\mathrm{\Delta}}{=}\min\limits_{x\in M}[R+(m_0+1)
\nu]\varphi(x,t) .$$ For fixed $t_0\in [0, T]$, assume $[R+(m_0+1)
\nu]\varphi(x_0,t_0)=u(t_0)<-(m_0+2+1)(m_0+2+n-1)C_{m_0}(t_0)$.
Otherwise, we have the estimate at time $t_0$.

 Let V be the corresponding unit eigenvector of the least eigenvalue of $N_{\alpha\beta}$
at $(x_0,t_0)$. Suppose the eigenvalue of the Ricci tensor is
$\{\lambda_i\}$, where $\lambda_1 \le \lambda_2 \le \cdots \le
\lambda_n$. Then by Lemma 2.1 and Proposition 3.1, we have
$$\arraycolsep=1.5pt\begin{array}{rcl}
Q(V,V)(x_0,t_0)
    &=& \Big[\varphi'\frac{1}{Ar_0}(\frac{\partial}{\partial t}-\Delta)d_t-\varphi''\frac{1}{(Ar_0)^2} \Big]\frac{u(t_0)}{\varphi} \\[4mm]
    &&\hskip 0.1cm +\varphi \Big[\sum\limits_{i<j}(M_{ij}^2+\sum\limits_{k\neq i,j}M_{ik}M_{jk})+(m_0+1)(M_{12}^2+\sum\limits_{k\neq 1,2}M_{1k}M_{2k})  \Big]\\[4mm]
    &=& \Big[\varphi'\frac{1}{Ar_0}(\frac{\partial}{\partial t}-\Delta)d_t-\varphi''\frac{1}{(Ar_0)^2} \Big]\frac{u(t_0)}{\varphi} \\[4mm]
    &&\hskip 0.1cm +\varphi \Big[\frac{\Big(\sum\limits_{i<j \atop \{i,j\} \ne \{1,2\}}M_{ij}+(m_0+2)M_{12}\Big)^2}{m_0+2}+f(M_1,\cdots,M_n)  \Big](here\ m=m_0+1) \\[4mm]
    &\ge& \Bigg[\varphi'\frac{1}{Ar_0}(-\frac{5(n-1)}{3}r_0^{-1})-\varphi''\frac{1}{(Ar_0)^2} \Bigg]\frac{u(t_0)}{\varphi} \\[4mm]
    &&\hskip 0.1cm +\frac{1}{(m_0+2)\varphi}u(t_0)^2-\varphi
    C(m_0)C_{m_0}(t_0)^2  \\[4mm]
    &=& \frac{1}{(m_0+2)\varphi}\Big[u(t_0)^2-\Big(\varphi'\frac{1}{Ar_0^2}\frac{5(n-1)(m_0+2)}{3}+\varphi''\frac{m_0+2}{(Ar_0)^2}\Big)u(t_0)\Big] \\[4mm]
    &&\hskip 0.1cm-C(m_0)C_{m_0}(t_0)^2. \\[4mm]
\end{array}$$

Since $|\varphi '|\le C(m_0),|\varphi ''|\le C(m_0),\frac{(\varphi
')^2}{\varphi}\le C(m_0)$, by applying maximum principle, we have
$$\arraycolsep=1.5pt\begin{array}{rcl}
\left.\frac{d^{-}}{dt}u\right|_{t=t_0}
    &\ge&  Q(V,V)(x_0,t_0)+\frac{2}{(Ar_0)^2}\frac{(\varphi ')^2}{\varphi^2}u(t_0) \\[4mm]
    &\ge&  \frac{1}{2(m_0+2)}u(t_0)^2, \\[4mm]
\end{array}$$
provided $|u|(t_0)\ge \max\Big\{C(m_0)C_{m_0}(t_0),
\frac{C(m_0)}{Ar_0^2}\Big\}$. By integrating the above differential
inequality, we get the estimate
$$u(t)\ge \min\Big\{\frac{1}{\frac{1}{u(0)}-\frac{t}{2(m_0+2)}}, -C(m_0)C_{m_0}(t), -\frac{C(m_0)}{Ar_0^2}\Big\}.$$

Clearly, there is a constant $C_{m_0+1}$, s.t. $u(t)\ge
\min\{-\frac{C_{m_0+1}}{t+\frac{1}{K_{m_0+1}}},
-\frac{C_{m_0+1}}{Ar_0^2}\}$ .

\end{proof}

\vskip 0.1cm\noindent{\bf Remark} In fact, the case $m=0$, we do not
need to suppose the soliton has vanishing Weyl tensor, since Chen in
\cite{Chen} have proved this result.

\begin{corollary}
Any gradient shrinking soliton(not necessarily having bounded
curvature) with vanishing Weyl tensor must have nonnegative
curvature operator.
\end{corollary}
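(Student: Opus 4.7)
The plan is to show $(R + m\nu)(y, 0) \ge 0$ for every $y \in M$ and every non-negative integer $m$; dividing by $m$ and sending $m \to \infty$ then yields $\nu \ge 0$, which is equivalent to non-negativity of the curvature operator. The combination $R + m\nu$ is the right quantity to focus on because it is exactly what Proposition 3.2 estimates from below.

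Normalize $\lambda = 1/2$, so that the soliton generates a self-similar ancient Ricci flow $g(t) = (1-t)\,\psi_t^{*}g$ on $(-\infty, 1)$, where $\psi_t$ is the flow of $\nabla f/(1-t)$. Self-similarity furnishes the scaling identities
\[
  (R+m\nu)(y, t) = (1-t)^{-1}(R+m\nu)(\psi_t(y), 0), \qquad d_{g(t)}(y_1, y_2) = \sqrt{1-t}\, d_g(\psi_t(y_1), \psi_t(y_2)).
\]
Fix $y_0 \in M$, a non-negative integer $m$, a parameter $T \in (0, 1)$, and $R_0 > 0$. Set $x_0 := \psi_T^{-1}(y_0)$ and $r_0 := R_0\sqrt{1-T}/A$, where $A \ge 2$ is still to be chosen. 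The distance identity shows that $B_t(x_0, Ar_0)$ is the preimage under $\psi_t$ of a $g$-ball of radius at most $R_0$ centred at $\psi_{t-T}(y_0)$; as $t$ varies over $[0, T]$, these centres lie in the compact curve $\{\psi_s(y_0) : s \in [-1, 0]\}$. Let $M^{*} = M^{*}(R_0, y_0) < \infty$ be the supremum of $\mathrm{Ric}_g$ on the $R_0$-neighbourhood of this curve; then the choice $A := \max\{2,\, R_0\sqrt{M^{*}/(n-1)}\}$ realises the hypothesis $\mathrm{Ric}_{g(t)} \le (n-1)r_0^{-2}$ on $B_t(x_0, Ar_0)$ for every $t \in [0, T]$, and taking $K_m := \max\{0,\, -\inf_{B_0(x_0, Ar_0)}(R + m\nu)\} < \infty$ supplies the remaining hypothesis of Proposition 3.2.

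Apply Proposition 3.2 at $(x, t) = (x_0, T)$. Using the first scaling identity, the equality $(Ar_0)^2 = R_0^2(1-T)$, and $T + 1/K_m \ge T$, and multiplying through by $(1-T)$, we obtain
\[
  (R+m\nu)(y_0, 0) \;\ge\; -\max\!\Bigl\{\frac{C_m(1-T)}{T},\ \frac{C_m}{R_0^2}\Bigr\}.
\]
Letting $T \to 1^{-}$ with $R_0$ fixed kills the first entry, giving $(R+m\nu)(y_0, 0) \ge -C_m/R_0^2$; then letting $R_0 \to \infty$ yields $(R+m\nu)(y_0, 0) \ge 0$. Since $y_0$ and $m$ were arbitrary, the reduction of the first paragraph completes the proof.

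The main technical obstacle is the self-similar bookkeeping in the second paragraph: one must arrange the Ricci-bound hypothesis of Proposition 3.2 with parameters $(r_0, A)$ for which the overall factor $(1-T)$ coming from the curvature scaling identity suppresses both entries of the maximum in the correct order of limits. This rests on compactness, in $(M, g)$, of the finite-time trajectory $\{\psi_s(y_0) : s \in [-1, 0]\}$, which is immediate from smoothness and completeness of the soliton.
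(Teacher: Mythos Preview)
Your argument works modulo one slip, and it takes a different route from the paper's.

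The slip: you identify the centre $\psi_t(x_0)$ with $\psi_{t-T}(y_0)$, implicitly assuming $\psi_t\circ\psi_T^{-1}=\psi_{t-T}$. This fails for the time-dependent field $\nabla f/(1-t)$; reparametrising by $\tau=-\log(1-t)$ shows instead that $\psi_t(x_0)=\tilde\psi_{\log((1-T)/(1-t))}(y_0)$, where $\tilde\psi$ is the (autonomous) flow of $\nabla f$. Hence the centre curve is $\{\tilde\psi_\sigma(y_0):\sigma\in[\log(1-T),0]\}$, which depends on $T$ and is not contained in any fixed compact set as $T\to 1^-$. This does not damage your argument: for each fixed $T$ the curve is still compact, so $M^*=M^*(R_0,y_0,T)<\infty$, and since neither $M^*$ nor $A$ appears in the final bound $(R+m\nu)(y_0,0)\ge-\max\{C_m(1-T)/T,\,C_m/R_0^2\}$, the limit $T\to 1^-$ goes through unchanged.

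The paper's proof is shorter and sidesteps the self-similar bookkeeping entirely. It exploits the \emph{ancient} direction of the soliton flow: one has a Ricci flow $g(t)$ on $(-\infty,0]$ with $g(0)=g$, and applies Proposition~3.2 on $[-T,0]$ for arbitrary $T>0$. One fixes a small $r_0$ (depending on $T$ but not on $A$) so that curvature is bounded by $r_0^{-2}$ on $B_t(x_0,r_0)$ for $t\in[-T,0]$---this suffices because the Ricci hypothesis enters the proof of Proposition~3.2 only through Perelman's distance-Laplacian estimate on the inner ball of radius $r_0$---and then sends $A\to\infty$ to obtain $(R+m\nu)(x_0,0)\ge-C_m/T$; finally $T\to\infty$. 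Your approach trades this simplicity for a direct verification of the Ricci hypothesis on the full ball $B_t(x_0,Ar_0)$ as Proposition~3.2 literally states it.
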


\begin{proof}
Let $(M,g,f)$ be a gradient shrinking soliton, then we have a
solution $g(t)(t\in(-\infty,0])$ of the Ricci flow with $g(0)=g$.

The case $n=3$ has done by Chen \cite{Chen}. Therefore we only need
to prove the case $n\ge4$.

Fix a point $x_0$ on $M$. For any $T>0$, there is a small $r_0$ such
that whenever $t\in [-T,0]$, $x\in B_t(x_0,r_0)$, we have
$$|R_m|(x,t)\le r_0^2 .$$
Let$A\rightarrow\infty$ in the Proposition 3.2, we get
$$(R+m\nu )(x,0)\ge -\frac{C_m}{T-0+\frac{1}{K_m}}\ge -\frac{C_m}{T} .$$

Since $C_m$ does not depend on T, we get that $(R+m\nu )(x,0)\ge 0$
for any m. This implies $\nu\ge0$, i.e., the curvature operator is
nonnegative.

\end{proof}

\begin{proposition}
Any gradient shrinking soliton(not necessarily having bounded
curvature) with vanishing Weyl tensor must have the following
properties:

(i) $Ric\ge0$;

(ii) $|R_{ijkl}|\le \exp\Big(a(d(p,x)^2+1)\Big)$ for some $a>0$ and
fixed point $p$.
\end{proposition}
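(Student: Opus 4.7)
The plan is to deduce both statements directly from Corollary 3.3 together with the normalized soliton identity $R+|\nabla f|^2=f$ established at the opening of Section 3. Part (i) is immediate: Corollary 3.3 asserts that the curvature operator is nonnegative, and tracing shows that nonnegative curvature operator implies nonnegative Ricci tensor.

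For part (ii), I would first use Corollary 3.3 a second time to reduce the problem to bounding the scalar curvature. When the curvature operator $M_{\alpha\beta}$ is nonnegative, all its eigenvalues are nonnegative and bounded by the trace, which is a fixed multiple of $R$; consequently every sectional curvature lies in $[0,C(n)R]$, and the algebraic symmetries give $|R_{ijkl}|\le C(n)R$. So it is enough to control $R(x)$ in terms of $d(p,x)$.

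Next I would control $R$ by $f$, and $f$ by the distance. From $R+|\nabla f|^2=f$ combined with the $R\ge 0$ just obtained, we get the two-sided inequality $0\le R\le f$ and $|\nabla f|^2\le f$; in particular $f\ge 0$ globally. On $\{f>0\}$, $|\nabla\sqrt{f}|=|\nabla f|/(2\sqrt{f})\le 1/2$; approximating uniformly by $\sqrt{f+\varepsilon}$ and letting $\varepsilon\to 0^+$ extends this to a $\tfrac12$-Lipschitz bound on all of $M$. Integrating along a minimizing geodesic from $p$ to $x$ yields $\sqrt{f(x)}\le \sqrt{f(p)}+d(p,x)/2$, so $f(x)\le C_1+C_2\,d(p,x)^2$. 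Combining the chain $|R_{ijkl}|\le C(n)R\le C(n)f\le C(1+d(p,x)^2)$ gives in fact quadratic (hence certainly exponential-in-$d^2$) growth, proving (ii) for any $a>0$.

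The conceptually substantive input is Corollary 3.3 — without nonnegativity of the curvature operator, neither the reduction $|R_{ijkl}|\le C(n)R$ nor the sign $R\ge 0$ (which makes $|\nabla f|^2\le f$ usable) is available. Given that corollary in hand, the only delicate technical point is the Lipschitz extension of $\sqrt{f}$ across the (possibly nontrivial) zero set of $f$, which is handled routinely by the $\varepsilon$-regularization above; the rest is direct integration along geodesics.
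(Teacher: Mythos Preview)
Your proof is correct and follows the same overall strategy as the paper: invoke Corollary~3.3 for (i), then for (ii) reduce $|R_{ijkl}|$ to $R$ via the nonnegative curvature operator, bound $R$ by $f$ using the normalized identity, and bound $f$ quadratically in distance. The one genuine difference is in how the quadratic bound on $f$ is obtained. The paper uses second-order information: since $Ric\ge 0$, the soliton equation $R_{ij}+\nabla_i\nabla_j f=\tfrac12 g_{ij}$ gives $\nabla^2 f\le \tfrac12 g$, and integrating $h''(t)\le \tfrac12$ twice along a minimizing geodesic yields $f(x)\le \tfrac14 d(p,x)^2+|\nabla f|(p)\,d(p,x)+|f|(p)$. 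You instead use the first-order consequence $|\nabla f|^2\le f$ (from $R\ge 0$) to get a $\tfrac12$-Lipschitz bound on $\sqrt{f}$ and integrate once. Both routes give the same leading coefficient $\tfrac14$ and are standard; your approach has the mild advantage that the distance estimate itself uses only $R\ge 0$ rather than full $Ric\ge 0$, while the paper's Hessian argument sidesteps the (easy) $\varepsilon$-regularization near $\{f=0\}$.
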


\begin{proof}
(i) follows  from Corollary 3.3 immediately.

(ii) Clearly, it is suffices to prove the result under the condition
that the soliton is normalized. So $R+|{\nabla}f|^2-f=0$. Combine
the soliton equation $R_{ij}+\nabla_i\nabla_j f=\frac{1}{2} g_{ij}$
and (i), we get that $\nabla_i\nabla_j f\le\frac{1}{2} g_{ij}$.

For any point $x\in M$, let $\gamma (t):[0,d(p,x)]\rightarrow M$ be
the shortest normal geodesic connecting $p$ and $x$, and denote by
$h(t)=f(\gamma (t))$, then

$h''(t)=<{\nabla}f,\dot{\gamma}>'=\dot{\gamma}<{\nabla}f,\dot{\gamma}>=\nabla^2f(\dot{\gamma},\dot{\gamma})\le
\frac{1}{2}$,

By integrating above inequality we have
$$\arraycolsep=1.5pt\begin{array}{rcl}
f(x)&=& h(d(p,x))\\[4mm]
    &\le& \frac{1}{4}d(p,x)^2-<{\nabla}f, \dot{\gamma}>(0)d(p,x)-h(0)\\[4mm]
    &\le& \frac{1}{4}d(p,x)^2+|{\nabla}f|(p)d(p,x)+|f|(p).\\[4mm]
\end{array}$$
Since the right hand side of the above inequality just depends on
the information of $f$ at $p$, so by $R+|{\nabla}f|^2-f=0$, we have
that for some $a>0$, $R\le f \le \exp\Big(a(d(p,x)^2+1)\Big)$, hence
$|R_{ijkl}|\le \exp\Big(a(d(p,x)^2+1)\Big)$(because of the
nonnegativity of the curvature operator).

\end{proof}

Finally, by  \cite{NW} or \cite{PW}, we get the classification
theorem 1.2.

\bibliographystyle{amsplain}

\end{document}